\documentclass[11pt]{article}

\usepackage[a4paper,margin=1in]{geometry}
\usepackage{amsmath,amssymb,amsthm,mathtools}
\usepackage{bm}
\usepackage{enumitem}
\usepackage{microtype}
\usepackage{hyperref}
\usepackage{mathrsfs}
\usepackage[nameinlink,capitalize]{cleveref}
\usepackage{booktabs}
\usepackage{latexsym}
\usepackage{amssymb,amsbsy,amsmath,amsfonts,amssymb,amscd}
\usepackage{graphicx,color}
\usepackage{float,url}
\usepackage{cancel}
\usepackage{mathtools}
\hypersetup{colorlinks=true,linkcolor=blue,citecolor=blue,urlcolor=blue,
  pdftitle={Deriving Subdiffusion Equations from General Renewal--Jump Dynamics},
  pdfauthor={Benoit Perthame and Min Tang}}
\allowdisplaybreaks

\newcommand{\commentout}[1]{}

\newcommand {\Chi} {{\bf \raise 2pt \hbox{$\chi$}} }

\newcommand{\beq}{\begin{equation}}
\newcommand{\eeq}{\end{equation}}

\newcommand{\dis}{\displaystyle}

\newcommand {\caL} { {\mathcal L} }

\newtheorem{theorem}{Theorem}[section]
\newtheorem{proposition}[theorem]{Proposition}
\newtheorem{lemma}[theorem]{Lemma}
\newtheorem{corollary}[theorem]{Corollary}
\theoremstyle{remark}
\newtheorem{remark}[theorem]{Remark}

\newcommand{\R}{\mathbb{R}}
\newcommand{\eps}{\varepsilon}
\newcommand{\Lap}{\mathcal{L}}
\newcommand{\J}{\mathcal{J}}

\newcommand{\dd}{\,\mathrm{d}}

\newcommand{\CapD}{\prescript{\mathrm C}{}{D}_t^\alpha}
\newcommand{\RLD}{\prescript{\mathrm{RL}}{}{D}_t^\alpha}

\title{Deriving subdiffusion equations from general Renewal--Jump dynamics}
\author{Beno\^it Perthame\thanks{Sorbonne Universit\'e, CNRS, Universit\'e de Paris, Inria, Laboratoire Jacques-Louis Lions UMR 7598, F-75005 Paris, France. Email: \texttt{Benoit.Perthame@sorbonne-universite.fr}.}
\and Min Tang\thanks{School of Mathematical Sciences and Institute of Natural Sciences, MOE-LSC, CMA-Shanghai, Shanghai Jiao Tong University, Shanghai, China. Email: \texttt{tangmin@sjtu.edu.cn}.}}
\date{August 19, 2026}

\begin{document}
\maketitle

\begin{abstract}
Subdiffusion occurs when long trapping or residence times of particles slow down spatial transport and produce a mean squared displacement proportional to $t^\alpha$, with~$0<\alpha<1$. We develop a general renewal--jump framework that combines the internal trapping dynamics with the reinjection of particles with spatial jumps. The internal trapping dynamics enters the macroscopic limit through the small-frequency behavior of a resolvent which limiting solution is not integrable. A growth mass condition determines the parameter~$\alpha$.
Then the spatial density converges on the time scale~$\eps^{-2/\alpha}$ to a time-fractional diffusion equation. This criterion provides a common derivation for models with very different internal mechanisms.

We first establish the abstract limit by Laplace transform in a general setting. We then show that the classical age-structured renewal model is a direct instance of the framework and apply the same criterion to an internal pathway model governed by a degenerate elliptic operator.
Both yield the subdiffusion equation and its correct initial condition. The approach clarifies which microscopic property produces subdiffusion, how the anomalous exponent determines the macroscopic scaling, and why the initial distribution enters the limit through the initial spatial mass under the stated preparation assumptions.
\end{abstract}

\medskip
\noindent\textbf{2020 Mathematics Subject Classification.} 35R09, 35B40, 44A10.

\noindent\textbf{Keywords.} Structured population dynamics; renewal equations; multiscale analysis; subdiffusion; Caputo derivative; Laplace transform.

\section{Introduction}

Subdiffusion is characterized by a mean squared displacement
\[
 \mathbb E|X(t)-X(0)|^2\sim t^\alpha,
 \qquad 0<\alpha<1,
\]
and has been observed in fluids, plasma physics, disordered media, economics and biological systems. Examples include lipid granules in living yeast cells \cite{sub4,sub7}, insulin granules undergoing intermittent microtubule binding and unbinding \cite{sub8}, and hydration water moving on heterogeneous protein surfaces \cite{Tan2018,Li2022}.  In these examples, a typical behavior is long trapping or residence times of particles between spatial jumps. We refer to \cite{Meroz2015,METZLER20001} for broad discussions of subdiffusive mechanisms.

A standard macroscopic description uses a fractional derivative in time. The Caputo operators \cite{Caputo1999} has different writings,
\begin{align}
 \CapD f(t)
 &=\frac{1}{\Gamma(1-\alpha)}
   \int_0^t\frac{\partial_\tau f(\tau)}{(t-\tau)^\alpha}\,\dd\tau
 =\RLD\bigl(f-f(0)\bigr).\label{eq:Caputo}
\end{align}
The typical subdiffusion equation is
\begin{equation}
 \CapD\rho-\kappa\Delta\rho=0,
 \qquad \rho(0,x)=\rho^{\rm in}(x).
 \label{eq:standard-fractional}
\end{equation}
The nonlocal time kernel records the history of trapping events.

The continuous-time random walk (CTRW) gives a classical microscopic interpretation. Particles alternate between spatial jumps and random waiting times. If the waiting-time density behaves as $t^{-1-\alpha}$, then the mean waiting time diverges and the ensemble-averaged motion is subdiffusive. Earlier works connected CTRWs to fractional Fokker--Planck equations through Green functions or generalized master equations \cite{PhysRevE.61.132,SIAP2015}. In such formulations, however, the heavy-tailed waiting-time law is usually prescribed. Such phenomena is usually explained by the fact that particles are heterogeneous, i.e. different particle behave differently, one can model this fact by adding an additional variable to the jump model that takes into account the heterogenity of the cells. For a model with internal state such as age or intracellular noise, this leaves open a mechanistic question: which internal dynamics generates the power-law residence time and, more generally, which microscopic operator is actually needed for the fractional limit?

The purpose of this paper is to address this question. We consider particles with an internal state $a$ describing, for example, the time spent in a trap or a chemical pathway state. The internal state evolves between renewals; a renewal occurs at rate $d(a)$; and each renewal is accompanied by a small spatial jump. Normal and superdiffusive limits for related nonclassical transport models were studied in \cite{GoudonFrank_2010,FrankSun_2018}. A common assumption in the normal-diffusion analysis is that the internal equilibrium is integrable. In normal diffusion, an integrable equilibrium allows direct averaging over the internal variable. In the subdiffusive regime, the equilibrium has infinite mass and this averaging fails and the mean residence time is infinite. We provide a general framework of deriving subdiffusion equation when the equilibrium has infinite mass, no matter the details of how the internal state evolves.

In particular, two quantities are of central importance in this general framework. The first is the normalized total mass, given that the equilibrium distribution has infinite mass. If a normalized small $\lambda$ is introduced into the equilibrium equation, the resolvent profile $Q_\lambda$ of the internal dynamics satisfies
\[
m(\lambda)=\int Q_\lambda(a)\,\mathrm{d}a\sim\gamma\lambda^{\alpha-1},
\]
which provides the general signature of subdiffusion. As $\lambda\to 0$, $Q_\lambda$ approaches the original equilibrium and $m(\lambda)\to\infty$.
The second key quantity is the renewal flux
\[
r(\lambda)=\int d(a)Q_\lambda(a)\,\mathrm{d}a,
\]
which is smaller than the total density by a factor of order $\lambda^{1-\alpha}$. Balancing this slow renewal flux with jumps of size $\varepsilon$ selects the macroscopic time scale $\varepsilon^{-2/\alpha}$.

Two different examples are discussed after we introduce the general derivation framework. The first one is the 
age-structured kinetic models, that have previously been used to obtain fractional equations, sometimes formally \cite{EstradaPainter_2019,Nikos2025} and with methods avoiding Laplace transform in \cite{berry2026GLQ}. The second one is a pathway based renewal process, where the internal dynamics $a$ can be certain intracelluar protein level \cite{Perthame2018,Xue2021}, that is evolved by a stochastic differential equation \cite{tu2005white,XZZT2025}. Subdiffusion limit of the second type of model has never been discussed before, which provides the most common microscopic mechanism for the appearance of power-law residence time. 

The paper is organized as follows. In \cref{sec:general}, we formulate the general renewal--jump model, state the resolvent criterion, and prove the abstract subdiffusion limit. In \cref{sec:age}, we embed the age-structured model into this framework, compute its resolvent, derive the fractional equation, and discuss the associated renewal-time decay. In \cref{sec:pathway}, we apply the same framework to a degenerate pathway diffusion in the internal state. Regularity estimates for the limiting equation are recalled in \cref{sec:regularity}. Laplace-transform and Tauberian tools are collected in the appendix.

\section{A general renewal--jump framework}
\label{sec:general}

\subsection{The general dynamics}

Let $x\in\R^d$ denote physical position and let $a$ belong to an internal state space $E$. We write $\langle f\rangle_a$ for integration over $E$. The unknown $u_\eps(t,a,x)\ge0$
is the density of particles at time $t$, position $x$ and with internal state $a$.

At each renewal, the rate at which a particle located at $y$ jumps to a new position $x$ is governed by a transition probability $\omega_\epsilon(x,y)$, which satisfies the following conditions: it is a measurable function such that
\begin{equation}\label{as:Subomega}
 \omega_\epsilon(x,y) \geq 0, \qquad \int_{\mathbb{R}^d} \omega_\epsilon(x,y) \, \dd x = 1. 
\end{equation}

We consider the following microscopic equation
\begin{subequations}\label{gen:eq}
\begin{align} 
&\eps^\beta  \partial_t u_\eps (t,a,x) + \caL_a u_\eps +d(a) u_\eps=M(a) U_\eps(t,x), \label{gen:eq1}
\\
&U_\eps(t,x) = \iint  d(a) u_\eps(t,a,y) \omega_\eps(x,y) \dd y \dd a, \label{gen:eq2}
\end{align}
\end{subequations}
with the initial condition 
\begin{equation}
 u_\eps (0,a,x) = u^{\rm in}(a,x). 
\end{equation}
Here the internal state evolves according to a positivity preserving operator $\caL_a$  acting only on the variable~$a$ and which has a conservative form
\begin{align} 
\caL_a^* 1=0.
\end{align} 
The renewal or escape rate is $d(a)\ge0$. The post-renewal internal state is distributed according to a probability measure $M(a)$ on $E$.  The equation is understood weakly when $M$ is a measure and we assume
\begin{equation}
 \langle M(a)\rangle_a=1.
 \label{eq:conservative-internal}
\end{equation}
Then $U_\eps(t,x)$ is the flux of newly renewed, or reinjected, particles arriving at $x$.

The spatial density at location $x$ and the renewal flux of particles leaving~$x$ are
\begin{equation}
 \rho_\eps(t,x)=\langle u_\eps(t,\cdot,x)\rangle_a,
 \qquad
 R_\eps(t,x)=\langle d(a)\,u_\eps(t,\cdot,x)\rangle_a.
 \label{eq:rho-R}
\end{equation}
Integrating \eqref{gen:eq1} in $a$ gives the exact balance law
\begin{equation}
 \eps^\beta\partial_t\rho_\eps+R_\eps-U_\eps=0.
 \label{eq:mass-balance}
\end{equation}
Using \eqref{eq:mass-balance} and \eqref{as:Subomega} which implies $\int R_\eps(t,x)\dd x=\int U_\eps(t,x)\dd x$, we obtain conservation of total mass,
\begin{equation}
 \int_{\R^d}\rho_\eps(t,x)\,\dd x
 =\int_{\R^d}\rho^{\rm in}(x)\,\dd x.
 \label{eq:mass-conservation}
\end{equation}

Allowing $M(a)$ to be a probability measure is not a cosmetic generalization. The age-structured boundary condition is obtained by taking $E=\R$, $\caL_a=\partial_a$, and $M=\delta_0$. The age model is therefore an exact specialization of \eqref{gen:eq}; see \cref{sec:age-embedding}.

\paragraph{Assumption of the transition probability $\omega_\eps(x,y)$.}
 We assume that, additionally to~\eqref{as:Subomega}, the transition probability $\omega_\eps(x,y)$ satisfies an unbiased finite-variance condition. More precisely, as~$\eps \to 0$,
\begin{align} \label{as:Subomegaeps}
\begin{cases}
\dis \frac{1}{\eps^2}\int_{\R^d}(y-x)\omega_\eps(x,y)\,\dd x\longrightarrow0,\\[10pt]
\dis \frac{1}{\eps^2}  \int_{\R^d}  (x-y)\otimes(x-y) \omega_\eps(x,y) \dd x \to A(y) >0  \qquad \text{a symmetric positive matrix},
\\[10pt]
\dis \frac{1}{\eps^3} \int_{\R^d}  |x-y|^3 \omega_\epsilon (x,y)\dd x \in L^\infty(\R^d) \qquad  \text{ uniformly in } \eps.
\end{cases}
\end{align}
locally uniformly in $x$. The scaling assumptions in \eqref{as:Subomegaeps} encode the physical requirement that the microscopic jump process converges to a local diffusive behavior. The parameter $\eps$ represents the typical jump length; as $\eps \to 0$, jumps become infinitesimal. To observe a non-trivial macroscopic evolution over finite time scales, the jump rate must be amplified by a factor of $\eps^{-2}$. The first condition ensures the absence of a persistent macroscopic drift; The second condition guarantees that the mean squared displacement per unit time stabilizes to a finite, positive value; The third condition is a uniform integrability estimate that prevents rare, long-range jumps from dominating the dynamics.

The  jump generator and its dual are defined as
\begin{equation}\label{eq:jump-generator-adjoint}
\J_\eps u (x)
 :=\frac{1}{\eps^2}\int_{\R^d} u(y)\omega_\eps(x,y)\,\dd x - u(x), \qquad \J_\eps^*\varphi(y)
 :=\frac{1}{\eps^2}\int_{\R^d}
   \bigl[\varphi(x)-\varphi(y)\bigr]\omega_\eps(x,y)\,\dd x.
\end{equation}
The weak limit of \(\J_\eps u (x)\) is the Fokker--Planck operator 
\begin{equation}\label{eq:FK}
\J\rho=\sum_{i,j=1}^d\partial_{ij}\bigl(A_{ij}(x)\rho\bigr),
\end{equation}
because, for smooth test functions $\varphi$, Taylor expansions combined with \eqref{as:Subomegaeps} yields the convergence of \(\J_\eps^*\varphi\) to the second-order differential operators
\begin{equation}\label{eq:jump-generator-limit}
\J^*\varphi=A:D^2\varphi.
\end{equation}
Indeed,  the first-order term vanishes by unbiasedness and the higher-order terms are negligible due to the third-moment bound. 

In the standard homogeneous setting $\omega_\eps(x,y)=\eps^{-d}\omega \big((x-y)/\eps\big)$ with $\omega$ symmetric and integrable, these conditions are automatically satisfied and the diffusivity reduces to the constant matrix
\begin{equation}
A=\frac12\int_{\R^d}z\otimes z\,\omega(z)\,\dd z.
\end{equation}
For example, in one dimension the triangular kernel $\omega(z)=(1-|z|)_+$ yields $A=1/12$.

\paragraph{The internal resolvent and the heavy-tail assumption.}
For a scalar constant $\lambda>0$, let $Q_\lambda$ satisfy
\begin{equation}
 \lambda Q_\lambda+\caL_a Q_\lambda+d(a)Q_\lambda=M(a).
 \label{eq:Qlambda-general}
\end{equation}
Then, we define
\begin{equation}
 m(\lambda)=\langle Q_\lambda\rangle_a,
 \qquad
 r(\lambda)=\langle d(a)Q_\lambda\rangle_a.
 \label{eq:m-r}
\end{equation}
Integrating \eqref{eq:Qlambda-general} and using \eqref{eq:conservative-internal}, we find the exact identity
\begin{equation}
 \lambda m(\lambda)+r(\lambda)=1.
 \label{eq:resolvent-conservation}
\end{equation}
The subdiffusive regime is encoded in the following resolvent heavy-tail assumption.

\medskip
There exist $0<\alpha<1$ and $\gamma>0$ such that
\begin{equation}
 m(\lambda)=\gamma\lambda^{\alpha-1}\bigl(1+o(1)\bigr)
 \qquad\text{as }\lambda\downarrow0.
 \label{eq:resolvent-heavy-tail}
\end{equation}
Then \eqref{eq:resolvent-conservation} immediately yields
\begin{equation}
 r(\lambda)=1-\gamma\lambda^\alpha\bigl(1+o(1)\bigr).
 \label{eq:r-asymptotic}
\end{equation}
Formally, when $\lambda\to 0$, $Q_\lambda$ converges to a stationary profile $Q$ satisfying
\[
 (\caL_a+d(a))Q(a)=M(a),
 \qquad \langle d(a)Q(a)\rangle_a=1,
\]
but \eqref{eq:resolvent-heavy-tail} corresponds to $\langle Q\rangle_a=\infty$. This non-integrability is the microscopic origin of the slow macroscopic dynamics and represents the 'fat tail' distribution of the steady state of~\eqref{gen:eq}, i.e., its behavior as $\eps \to 0$.

\paragraph{Assumptions on the initial condition.}
To control the contribution of the initial internal distribution, set
\[
 G_\lambda=(\lambda+\caL_a+d)^{-1}.
\]
We assume, locally in $x$, that
\begin{equation}
 \bigl\langle G_\lambda u^{\rm in}(\cdot,x)\bigr\rangle_a
 \le C(x)m(\lambda),
 \qquad
 \bigl\langle d(a)G_\lambda u^{\rm in}(\cdot,x)\bigr\rangle_a\le C(x),
 \label{eq:initial-resolvent-bound}
\end{equation}
with $C\in L^1_{\rm loc}(\R^d)$. These assumptions permit non-equilibrium initial internal states but exclude an initial tail heavier than the stationary trapping tail. The latter case can retain additional  terms in the limit and is not treated here.

\subsection{Abstract subdiffusion limit}

The only available bound, uniform in $\eps$, for solutions of~\eqref{gen:eq} is \eqref{eq:mass-balance}. Therefore we consider measure valued solutions in the limit and establish the following convergence result.

\begin{theorem}[General renewal criterion for subdiffusion]
\label{thm:general-subdiffusion}
Assume \eqref{as:Subomega}, \eqref{as:Subomegaeps}, \eqref{eq:resolvent-heavy-tail}, \eqref{eq:conservative-internal}, and \eqref{eq:initial-resolvent-bound}. Choose
\begin{equation}
 \beta=\frac{2}{\alpha}.
 \label{eq:beta-scaling}
\end{equation}
Let $u_\eps$ be a nonnegative solution of~\eqref{gen:eq} with uniformly bounded total mass. Then every weak measure limit $\rho$ of a subsequence extracted from $\rho_\eps$ satisfies
\begin{equation}
 \CapD\rho-\frac{1}{\gamma}\J\rho=0,
 \qquad
 \rho(0,x)=\rho^{\rm in}(x),
 \label{eq:general-Caputo-limit}
\end{equation}
in the sense of distributions. Equivalently,
\begin{equation}
 \partial_t\int_0^t\frac{\rho(\tau,x)}{(t-\tau)^\alpha}\,\dd\tau
 -\frac{\Gamma(1-\alpha)}{\gamma}\J\rho(t,x)
 =t^{-\alpha}\rho^{\rm in}(x).
 \label{eq:general-RL-limit}
\end{equation}
If \eqref{eq:general-Caputo-limit} has a unique measure-valued solution, the full sequence converges.
\end{theorem}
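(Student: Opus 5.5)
The plan is to work entirely in Laplace variables in time and in weak form in $x$. Fix a test function $\varphi\in C_c^\infty(\R^d)$ and set $p>0$ for the Laplace variable. Denote by $\hat u_\eps(p,a,x)$ the Laplace transform of $u_\eps$ in $t$; it is well defined since the total mass is bounded. Transforming \eqref{gen:eq1} gives
\[
(\eps^\beta p+\caL_a+d)\hat u_\eps=\eps^\beta u^{\rm in}+M(a)\hat U_\eps .
\]
With $\lambda=\eps^\beta p$ this becomes
\[
\hat u_\eps=\eps^\beta G_\lambda u^{\rm in}+Q_\lambda \hat U_\eps ,
\]
and therefore
\[
\hat\rho_\eps=\eps^\beta\langle G_\lambda u^{\rm in}\rangle_a+m(\lambda)\hat U_\eps,
\qquad
\hat R_\eps=\eps^\beta\langle dG_\lambda u^{\rm in}\rangle_a+r(\lambda)\hat U_\eps .
\]
The key point is that the internal variable is eliminated exactly through the scalar functions $m,r$ of \eqref{eq:m-r}. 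No compactness in $a$ is required.

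Next I eliminate $\hat U_\eps$ between the two identities. By \eqref{eq:resolvent-conservation}, $r=1-\lambda m$. Using the balance law \eqref{eq:mass-balance} in Laplace form, $\eps^\beta(p\hat\rho_\eps-\rho^{\rm in})=\hat U_\eps-\hat R_\eps$, together with $\hat U_\eps=\J$-type transfer of $\hat R_\eps$ (namely $\hat U_\eps-\hat R_\eps=\eps^2\J_\eps\hat R_\eps$ in the weak sense, pairing with $\varphi$ and using $\J^*_\eps\varphi$), I obtain
\[
p\langle\hat\rho_\eps,\varphi\rangle-\langle\rho^{\rm in},\varphi\rangle=\eps^{2-\beta}\langle \hat R_\eps,\J_\eps^*\varphi\rangle .
\]
Now express $\hat R_\eps$ through $\hat\rho_\eps$. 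One has $\hat U_\eps=(\hat\rho_\eps-\eps^\beta\langle G_\lambda u^{\rm in}\rangle)/m(\lambda)$, and $\hat R_\eps$ differs from $\hat U_\eps$ by $O(\eps^2)$ after pairing. Hence
\[
\eps^{2-\beta}\hat R_\eps\approx \frac{\eps^{2-\beta}}{m(\eps^\beta p)}\hat\rho_\eps\;\longrightarrow\;\frac{p^{1-\alpha}}{\gamma}\hat\rho ,
\]
because $m(\eps^\beta p)\sim\gamma\eps^{\beta(\alpha-1)}p^{\alpha-1}$ and $\beta=2/\alpha$ gives $\eps^{2-\beta-\beta(\alpha-1)}=\eps^{2-\beta\alpha}=1$. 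This is exactly where the scaling \eqref{eq:beta-scaling} is selected.

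The initial-data remainders are controlled by \eqref{eq:initial-resolvent-bound}. The term $\eps^{2-\beta}\eps^\beta\langle G_\lambda u^{\rm in}\rangle/m$ is $O(\eps^2 C(x))$. The term $\eps^{2-\beta}\eps^\beta\langle dG_\lambda u^{\rm in}\rangle$ is $O(\eps^2C(x))$, and it is paired with $\J^*_\eps\varphi$, which is uniformly bounded by the third-moment assumption. Both therefore vanish. The limit identity is then
\[
p\hat\rho-\rho^{\rm in}=\tfrac1\gamma p^{1-\alpha}\J\hat\rho ,
\]
which after dividing by $p^{1-\alpha}$ reads $p^\alpha\hat\rho-p^{\alpha-1}\rho^{\rm in}=\gamma^{-1}\J\hat\rho$. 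This is the Laplace transform of \eqref{eq:general-Caputo-limit}, and \eqref{eq:general-RL-limit} follows from \eqref{eq:Caputo} and $\mathcal L[t^{-\alpha}]=\Gamma(1-\alpha)p^{\alpha-1}$, using the Laplace tools of the appendix.

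The main obstacle is passing to the limit in the products. First, $\J^*_\eps\varphi\to A:D^2\varphi$ only locally uniformly, while $\hat\rho_\eps(p,\cdot)$ converges only weakly as measures. Since the mass is bounded I will use tightness, or restrict to compact support via $\varphi$, and the uniform $L^\infty$ bound on $\J^*_\eps\varphi$ from the third moment to justify $\langle\hat\rho_\eps,\J^*_\eps\varphi\rangle\to\langle\hat\rho,\J^*\varphi\rangle$. If tightness fails, I would first take $\varphi$ such that the convergence is uniform on $\R^d$.

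Second, weak-$*$ convergence of $\rho_\eps$ in $L^\infty_t(\mathcal M_x)$ along a subsequence yields pointwise convergence of $\hat\rho_\eps(p)$ for each $p>0$, since $e^{-pt}\varphi(x)$ is an admissible test function.

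Third, uniqueness of the Laplace transform identifies $\rho$. When the Caputo problem has a unique solution, every subsequence has the same limit, so the whole family converges.
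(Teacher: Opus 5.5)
Your proposal is correct and follows essentially the same route as the paper. Both arguments use the Laplace transform with $\lambda=\eps^\beta p$, the resolvent decomposition $\widehat u_\eps=Q_\lambda\widehat U_\eps+\eps^\beta G_\lambda u^{\rm in}$, elimination of $\widehat U_\eps$ to get the closure $\eps^{2-\beta}\widehat R_\eps\approx\gamma^{-1}p^{1-\alpha}\widehat\rho_\eps$, and the weak Laplace form of the balance law, where $\alpha\beta=2$ is selected.

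The only difference is cosmetic: you pass from $\widehat U_\eps$ to $\widehat R_\eps$ by closeness, whereas the paper uses the ratio $r/m$ with $r=1-\lambda m$. Your step is best justified by the exact pointwise identity $\widehat U_\eps-\widehat R_\eps=\eps^\beta(p\widehat\rho_\eps-\rho^{\rm in})$, which makes $\eps^{2-\beta}(\widehat R_\eps-\widehat U_\eps)$ of order $\eps^2$ in total variation. A bare $O(\eps^2)$ bound on $\widehat R_\eps-\widehat U_\eps$ itself would not suffice after multiplying by $\eps^{2-\beta}$ when $\alpha\le 1/2$.
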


\begin{remark}[The coefficient and normalization]
The coefficient in \eqref{eq:general-RL-limit} is $\Gamma(1-\alpha)/\gamma$. This is the reciprocal of the resolvent prefactor, up to the normalization of the fractional derivative. In the age-structured example, $\gamma=\Gamma(1-\alpha)$, so the unnormalized form \eqref{eq:general-RL-limit} has unit diffusion coefficient.
\end{remark}

\begin{remark}[Interpretation of the initial term]
The singular source $t^{-\alpha}\rho^{\rm in}$ in \eqref{eq:general-RL-limit} does not indicate that the solution jumps from zero at $t=0$. It is exactly the Riemann--Liouville representation of the standard Caputo initial condition $\rho(0)=\rho^{\rm in}$. Writing the equation in the Caputo form \eqref{eq:general-Caputo-limit} avoids this possible ambiguity.
\end{remark}

\begin{remark}[Laplace--weak formulation]
For measure-valued limits, \eqref{eq:general-Caputo-limit} may be taken to mean that, for every $s>0$ and $\varphi\in C_c^3(\R^d)$,
\begin{equation}
 s^\alpha\int_{\R^d}\widehat\rho(s,x)\varphi(x)\,\dd x
 -s^{\alpha-1}\int_{\R^d}\rho^{\rm in}(x)\varphi(x)\,\dd x
 -\frac{1}{\gamma}\int_{\R^d}\widehat\rho(s,x)\J^*\varphi(x)\,\dd x=0.
 \label{eq:Laplace-weak-definition}
\end{equation}
This formulation is also the one obtained directly from the microscopic equations and avoids imposing time regularity before it has been established.
\end{remark}

\noindent \textbf{Proof:} 
For $s>0$, let a hat denote the Laplace transform in $t$ and set
\[
 \lambda=\eps^\beta s.
\]
The Laplace transform of equation \eqref{gen:eq} becomes
\begin{equation}
 (\lambda+\caL_a+d(a))\widehat u_\eps
 =M(a)\widehat U_\eps+\eps^\beta u^{\rm in},
 \label{eq:general-Laplace}
\end{equation}
from which we infer that
\[
 \widehat u_\eps
 =Q_\lambda\widehat U_\eps+\eps^\beta G_\lambda u^{\rm in}.
 \label{eq:resolvent-decomposition}
\]
Integrating once with weight $1$ and once with weight $d(a)$ yields
\[ \widehat\rho_\eps
 =m(\lambda)\widehat U_\eps+\eps^\beta \langle G_\lambda u^{\rm in}\rangle_a,
\qquad
 \widehat R_\eps
 =r(\lambda)\widehat U_\eps+\eps^\beta\langle d(a)G_\lambda u^{\rm in}\rangle_a.
\]
Eliminating $\widehat U_\eps$ gives
\begin{equation}
 \widehat R_\eps
 =\frac{r(\lambda)}{m(\lambda)}\widehat\rho_\eps
 +\eps^\beta\left(\langle dG_\lambda u^{\rm in}\rangle_a-\frac{r(\lambda)}{m(\lambda)}\langle G_\lambda u^{\rm in}\rangle_a\right).
 \label{eq:R-rho-exact}
\end{equation}
By \eqref{eq:resolvent-heavy-tail}, \eqref{eq:r-asymptotic}, and \eqref{eq:initial-resolvent-bound}, we conclude that
\begin{equation}
 \widehat R_\eps
 =\frac{1}{\gamma}(\eps^\beta s)^{1-\alpha}\widehat\rho_\eps
 +o\bigl(\eps^{\beta(1-\alpha)}\bigr)
 \label{eq:key-flux-density}
\end{equation}
weakly on compact sets. This flux--density relation is the central closure. It replaces the equilibrium averaging used in normal diffusion.

Taking the Laplace transform of \eqref{eq:mass-balance}, testing against $\varphi\in C_c^3(\R^d)$, and using \eqref{eq:jump-generator-adjoint}, we obtain
\begin{equation}
 \eps^\beta\left[
 s\int_{\R^d}\widehat\rho_\eps\varphi\,\dd x
 -\int_{\R^d}\rho^{\rm in}\varphi\,\dd x\right]
 -\eps^2\int_{\R^d}\widehat R_\eps\J_\eps^*\varphi\,\dd x=0.
 \label{eq:weak-Laplace-balance}
\end{equation}
Divide by $\eps^\beta$ and multiply by $s^{\alpha-1}$. Substituting \eqref{eq:key-flux-density} gives
\begin{align}
 s^\alpha\int\widehat\rho_\eps\varphi
 -s^{\alpha-1}\int\rho^{\rm in}\varphi
 -\frac{1}{\gamma}\eps^{2-\alpha\beta}
 \int\widehat\rho_\eps\J_\eps^*\varphi=o(1).
 \label{eq:balance-scaled}
\end{align}
The choice $\alpha\beta=2$ makes all three terms of the same order. Passing to the limit using \eqref{eq:jump-generator-limit}, we find
\begin{equation}
 s^\alpha\int\widehat\rho\varphi
 -s^{\alpha-1}\int\rho^{\rm in}\varphi
 -\frac{1}{\gamma}\int\widehat\rho\J^*\varphi=0.
 \label{eq:limit-Laplace-weak}
\end{equation}
This is the Laplace transform of \eqref{eq:general-Caputo-limit}. The equivalent form \eqref{eq:general-RL-limit} follows from
\[
 \Lap_s\left[\partial_t\int_0^t\frac{f(\tau)}{(t-\tau)^\alpha}\,\dd\tau\right]
 =\Gamma(1-\alpha)s^\alpha\widehat f(s),
 \qquad
 \Lap_s[t^{-\alpha}]=\Gamma(1-\alpha)s^{\alpha-1}.
\]
This proves the theorem.

\begin{remark}
The same framework contains the normal diffusion limit. Suppose instead that
\begin{equation*}
m(0)=\lim_{\lambda\downarrow0}m(\lambda)=\overline m<\infty.
\end{equation*}
 Choosing $\beta=2$ in \eqref{eq:mass-balance} gives the Fokker-Planck equation (see \eqref{eq:FK})
\begin{equation*}
 \partial_t\rho-\frac{1}{\overline m}\J\rho=0,
 \qquad \rho(0)=\rho^{\rm in}.
\end{equation*}
\end{remark}

\section{The age-structured model}
\label{sec:age}

In our context, the age-structured equation with jumps is written
\begin{equation}
 \begin{cases}
 \eps^\beta\partial_tu_\eps+\partial_au_\eps+d(a)u_\eps=0,
 &a>0,\ t>0,\ x\in\R^d,\\[2mm]
 u_\eps(t,0,x)=U_\eps(t,x)
 =\displaystyle\int_{\R^d}\int_0^\infty
 d(a)u_\eps(t,a,y)\omega_\eps(x,y)\,\dd a\dd y,\\[2mm]
 u_\eps(0,a,x)=u^{\rm in}(a,x).
 \end{cases}
 \label{eq:age-model}
\end{equation}
It is a standard equation (see \cite{PerthameTEB} and references therein) which arises to describe trapping times between spatial jumps and has been already studied in particular in the regime of interest here \cite{BerryLG2016, CGA_2019}.

We first show how to embed this example in our general framework of  Section~\ref{sec:general} and establish its subdiffusion limit. We conclude with a remark on the long term convergence for the simple age structure equation without space.

\subsection{Embedding the renewal boundary into the abstract equation}
\label{sec:age-embedding}

Extend $u_\eps$ by zero to $a<0$. In the distributional sense on $\R_a$,
\[
 \partial_a\bigl(\mathbf 1_{a>0}u_\eps\bigr)
 =\mathbf 1_{a>0}\partial_au_\eps+u_\eps(t,0,x)\delta_0(a).
\]
Consequently, \eqref{eq:age-model} is equivalent to
\begin{equation}
 \eps^\beta\partial_tu_\eps+\partial_au_\eps+d(a)u_\eps
 =\delta_0(a)U_\eps(t,x).
 \label{eq:age-abstract-form}
\end{equation}
It is therefore exactly \eqref{gen:eq} with
\[
 E=\R_+,
 \qquad
 \caL_a=\partial_a,
 \qquad
 M(a)=\delta_0.
 \label{eq:age-identification}
\]
This identification is the reason the age-structured derivation should be organized through the general resolvent theorem rather than repeated independently.

The internal resolvent equation is
\[
 \lambda Q_\lambda+\partial_aQ_\lambda+d(a)Q_\lambda=\delta_0,
 \qquad Q_\lambda(a)=0\quad(a<0),
 \label{eq:age-resolvent-distribution}
\]
which is equivalent to the boundary-value problem
\begin{equation}
 \lambda Q_\lambda+\partial_aQ_\lambda+d(a)Q_\lambda=0,
 \quad a>0,
 \qquad Q_\lambda(0)=1.
 \label{eq:age-resolvent-bvp}
\end{equation}

Set
$D(a)=\int_0^a d(a')\,\dd a'$. Hence
\[
 Q_\lambda(a)=\mathbf 1_{a\ge0}\exp\bigl[-D(a)-\lambda a\bigr].
 \label{eq:age-Qlambda}
\]
All information needed by the general \cref{thm:general-subdiffusion} is therefore contained in the single scalar function
\begin{equation*}
 m(\lambda)=\int_0^\infty e^{-D(a)-\lambda a}\,\dd a.
\end{equation*}

\subsection{Power-law escape rate and resolvent asymptotics}

We now choose
\begin{equation}
 d(a)=\frac{\alpha}{1+a},
 \qquad 0<\alpha<1.
 \label{eq:power-escape}
\end{equation}
Then
\begin{equation}
 D(a)=\alpha\log(1+a),
 \qquad e^{-D(a)}=(1+a)^{-\alpha}.
 \label{eq:power-equilibrium}
\end{equation}
The stationary profile is $e^{-D(a)}$ which is not integrable, while $d(a)e^{-D(a)}$ is integrable and has mass one. This is the infinite-mean residence-time regime.

\begin{lemma}[Age resolvent asymptotics]
\label{lem:age-resolvent}
For the age structures equation \eqref{eq:age-resolvent-bvp} with $d(a)$ determined by \eqref{eq:power-escape}, we have 
\begin{equation}
 m(\lambda)
 =\int_0^\infty\frac{e^{-\lambda a}}{(1+a)^\alpha}\,\dd a
 \sim\Gamma(1-\alpha)\lambda^{\alpha-1}
 \qquad(\lambda\downarrow0).
 \label{eq:age-m-asymptotic}
\end{equation}
Moreover,
\begin{equation}
 r(\lambda)=\int_0^\infty d(a)Q_\lambda(a)\,\dd a
 =1-\lambda m(\lambda)
 =1-\Gamma(1-\alpha)\lambda^\alpha(1+o(1)).
 \label{eq:age-r-asymptotic}
\end{equation}
\end{lemma}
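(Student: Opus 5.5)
The plan is to compute $m(\lambda)$ directly from the explicit resolvent $Q_\lambda(a)=\mathbf 1_{a\ge0}(1+a)^{-\alpha}e^{-\lambda a}$, which follows from \eqref{eq:power-equilibrium}. The identity for $m(\lambda)$ in \eqref{eq:age-m-asymptotic} is then immediate. For the asymptotics, substitute $b=\lambda a$:
\[
 m(\lambda)=\lambda^{\alpha-1}\int_0^\infty \frac{e^{-b}}{(\lambda+b)^\alpha}\,\dd b .
\]
As $\lambda\downarrow0$, the integrand increases pointwise to $b^{-\alpha}e^{-b}$. It is dominated by $b^{-\alpha}e^{-b}$, which is integrable near $0$ precisely because $\alpha<1$. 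By monotone (or dominated) convergence the integral therefore tends to $\int_0^\infty b^{-\alpha}e^{-b}\,\dd b=\Gamma(1-\alpha)$. This gives $m(\lambda)=\Gamma(1-\alpha)\lambda^{\alpha-1}(1+o(1))$, which is \eqref{eq:resolvent-heavy-tail} with $\gamma=\Gamma(1-\alpha)$.

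For the flux, I would first check the exact identity \eqref{eq:resolvent-conservation} in this setting rather than just quoting it, since $M=\delta_0$ is a measure. On $a>0$ the resolvent equation \eqref{eq:age-resolvent-bvp} gives
\[
d(a)Q_\lambda=-\partial_aQ_\lambda-\lambda Q_\lambda .
\]
Integrating over $(0,\infty)$ and using $Q_\lambda(0)=1$ and $Q_\lambda(a)\to0$ as $a\to\infty$ (which holds since $\lambda>0$) yields $r(\lambda)=1-\lambda m(\lambda)$. All integrals are finite because of the factor $e^{-\lambda a}$. Inserting the asymptotics of $m$ then gives $r(\lambda)=1-\Gamma(1-\alpha)\lambda^\alpha(1+o(1))$, as in \eqref{eq:age-r-asymptotic}.

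The only genuine step is justifying the limit in the rescaled integral. The obstacle is the singularity of $b^{-\alpha}$ at $b=0$, and the domination argument above handles it using $0<\alpha<1$. Alternatively one could quote the Abelian theorem from the appendix: $e^{-D(a)}\sim a^{-\alpha}$ as $a\to\infty$ implies that its Laplace transform behaves like $\Gamma(1-\alpha)\lambda^{\alpha-1}$. The direct computation is self-contained, however, so I would present that one.
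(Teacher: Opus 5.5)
Your proposal is correct and follows the paper's proof: you use the same rescaling $z=\lambda a$, pass to the limit $\int_0^\infty e^{-z}z^{-\alpha}\,\dd z=\Gamma(1-\alpha)$, and get the flux identity by integrating the boundary-value problem \eqref{eq:age-resolvent-bvp} (equivalently, from \eqref{eq:resolvent-conservation}). Your global bound $(\lambda+b)^{-\alpha}e^{-b}\le b^{-\alpha}e^{-b}$ is a slightly cleaner justification than the paper's splitting near $z=0$, since it lets monotone or dominated convergence act on all of $(0,\infty)$ at once.
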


\begin{proof}
With $z=\lambda a$,
\[
 m(\lambda)
 =\lambda^{\alpha-1}\int_0^\infty
 \frac{e^{-z}}{(\lambda+z)^\alpha}\,\dd z.
\]
The integral converges to $\int_0^\infty e^{-z}z^{-\alpha}\,\dd z=\Gamma(1-\alpha)$ by a standard splitting near $z=0$ and dominated convergence away from zero. The identity for $r$ follows either from \eqref{eq:resolvent-conservation} or by integrating \eqref{eq:age-resolvent-bvp}.
\end{proof}

The initial resolvent bounds also follow directly by straight forward explicit calculations. For $f\ge0$, 
\begin{equation}
 (G_\lambda f)(a)
 =\int_0^a e^{-D(a)+D(a')-\lambda(a-a')}f(a')\,\dd a'.
 \label{eq:age-G}
\end{equation}
If we assume that 
\begin{equation}
 \int_{\R^d}\int_0^\infty(1+a)^\alpha u^{\rm in}(a,x)\,\dd a\dd x<\infty,
 \label{eq:age-initial-weight}
\end{equation}
then, by Fubini's theorem,
\begin{align}
 \langle G_\lambda u^{\rm in}\rangle_a
 &\le m(\lambda)\int_0^\infty(1+a)^\alpha u^{\rm in}(a,x)\,\dd a,\label{eq:age-H0-bound}
 \\
 \langle d(a)G_\lambda u^{\rm in}\rangle_a
 &\le \int_0^\infty u^{\rm in}(a,x)\,\dd a.
 \label{eq:age-H1-bound}
\end{align}
 Thus \eqref{eq:initial-resolvent-bound} is verified.

The abstract theorem can now be applied without repeating the multiscale calculation.

\begin{theorem}[Subdiffusion limit of the age-structured jump model]
\label{thm:age-limit}
Assume the spatial jump condition \eqref{eq:jump-generator-limit}, the escape rate \eqref{eq:power-escape}, nonnegative initial data with finite mass, and \eqref{eq:age-initial-weight}. Let
\begin{equation}
 \beta=\frac{2}{\alpha}.
\end{equation}
Then every weak limit of
\[
 \rho_\eps(t,x)=\int_0^\infty u_\eps(t,a,x)\,\dd a
\]
solves
\begin{equation}
 \CapD\rho-\frac{1}{\Gamma(1-\alpha)}\J\rho=0,
 \qquad \rho(0,x)=\rho^{\rm in}(x).
 \label{eq:age-Caputo-limit}
\end{equation}
Equivalently, we may write
\begin{equation}
 \partial_t\int_0^t\frac{\rho(\tau,x)}{(t-\tau)^\alpha}\,\dd\tau
 -\J\rho(t,x)
 =t^{-\alpha}\rho^{\rm in}(x).
 \label{eq:age-RL-limit}
\end{equation}
where $\J$ takes the Fokker-Plank form $\J\rho=\sum_{i,j}\partial_{ij}(A_{ij}\rho)$ as in \eqref{eq:FK}.
\end{theorem}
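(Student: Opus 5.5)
The plan is to obtain \cref{thm:age-limit} as a direct specialization of \cref{thm:general-subdiffusion}. No multiscale computation will be redone; I will only check that each hypothesis of the abstract theorem holds for the age model, and then read off $\gamma$.

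\emph{Step 1: embedding.} By \cref{sec:age-embedding}, the boundary-value system \eqref{eq:age-model}, with $u_\eps$ extended by zero to $a<0$, is equivalent to \eqref{eq:age-abstract-form}. This is \eqref{gen:eq} with $\caL_a=\partial_a$ and $M=\delta_0$. Both structural conditions hold: $\langle M\rangle_a=1$ gives \eqref{eq:conservative-internal}, and $\caL_a^*1=0$ holds in the weak sense. For the latter I will note that $\int\partial_a(\mathbf 1_{a>0}u)\,\dd a=0$ as soon as $u$ decays at $a=\infty$, which the finite-mass setting provides. The quantity $\rho_\eps$ in the statement coincides with \eqref{eq:rho-R}. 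The jump assumptions \eqref{as:Subomega}--\eqref{as:Subomegaeps} are understood as included in the phrase ``spatial jump condition'', and I will state this explicitly.

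\emph{Step 2: heavy tail and initial data.} \cref{lem:age-resolvent} gives \eqref{eq:resolvent-heavy-tail} with $\gamma=\Gamma(1-\alpha)$ and the given $\alpha\in(0,1)$. For the initial data, I will justify \eqref{eq:age-H0-bound}--\eqref{eq:age-H1-bound} from the explicit formula \eqref{eq:age-G}.

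\begin{itemize}
\item For the bound on $\langle G_\lambda u^{\rm in}\rangle_a$, apply Fubini. The inner integral is $\int_{a'}^\infty (1+a')^{\alpha}(1+a)^{-\alpha}e^{-\lambda(a-a')}\dd a$. Substituting $a=a'+b$ and using $(1+a'+b)^{-\alpha}\le(1+b)^{-\alpha}$ bounds this inner integral by $(1+a')^\alpha m(\lambda)$.
\item For the bound on $\langle d(a)G_\lambda u^{\rm in}\rangle_a$, the inner integral is $\int_{a'}^\infty d(a)e^{-(D(a)-D(a'))}\dd a\le 1$, since $d(a)e^{-(D(a)-D(a'))}=-\partial_a e^{-(D(a)-D(a'))}$.
\end{itemize}

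Together these give \eqref{eq:initial-resolvent-bound} with $C(x)=\int(1+a)^\alpha u^{\rm in}(a,x)\,\dd a$. This $C$ lies in $L^1(\R^d)$ by \eqref{eq:age-initial-weight}, hence in $L^1_{\rm loc}$.

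\emph{Step 3: apply the theorem.} With $\beta=2/\alpha$ and uniformly bounded total mass (by \eqref{eq:mass-conservation}, since the initial data have finite mass), \cref{thm:general-subdiffusion} yields \eqref{eq:general-Caputo-limit} with $1/\gamma=1/\Gamma(1-\alpha)$, which is \eqref{eq:age-Caputo-limit}. In the form \eqref{eq:general-RL-limit} the coefficient is $\Gamma(1-\alpha)/\gamma=1$, which gives \eqref{eq:age-RL-limit}.

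The main point requiring care is Step 2. One must check that the $\lambda$-dependence in the bounds is exactly $m(\lambda)$, and not something larger, uniformly in $a'$. Otherwise the remainder in \eqref{eq:key-flux-density} would not be $o(\eps^{\beta(1-\alpha)})$. The monotonicity $(1+a'+b)^{-\alpha}\le(1+b)^{-\alpha}$ makes this work. A secondary subtlety is the measure-valued source $\delta_0$, but this is handled exactly as in the weak interpretation of \eqref{gen:eq}.
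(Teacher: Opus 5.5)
Your proposal is correct and takes the same route as the paper: embed the age model as the abstract model with $M=\delta_0$, obtain $\gamma=\Gamma(1-\alpha)$ from \cref{lem:age-resolvent}, verify \eqref{eq:initial-resolvent-bound} through \eqref{eq:age-H0-bound}--\eqref{eq:age-H1-bound}, and apply \cref{thm:general-subdiffusion}. Your explicit Fubini computations fill in details the paper leaves implicit, namely the shift $a=a'+b$ with $(1+a'+b)^{-\alpha}\le(1+b)^{-\alpha}$ and the identity $d(a)e^{-(D(a)-D(a'))}=-\partial_a e^{-(D(a)-D(a'))}$. Your remark that the initial-data remainder becomes negligible only because the bound is of order $m(\lambda)$, rather than the trivial $\lambda^{-1}$, is also accurate.
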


\begin{proof}
By \cref{sec:age-embedding}, the age structured equation is the abstract model with $M=\delta_0$. Lemma \ref{lem:age-resolvent} verifies \eqref{eq:resolvent-heavy-tail} with $\gamma=\Gamma(1-\alpha)$, while \eqref{eq:age-H0-bound}--\eqref{eq:age-H1-bound} verify the initial resolvent assumptions. The result follows from \cref{thm:general-subdiffusion}.
\end{proof}

\begin{remark}[Dependence on the initial age distribution]
Under \eqref{eq:age-initial-weight}, the detailed initial age profile disappears from the leading macroscopic equation; only its spatial mass $\rho^{\rm in}(x)$ remains. This is a conclusion of the resolvent estimate, not an assumption that particles are initially at equilibrium. If the initial population contains an age tail comparable to or heavier than $(1+a)^{-\alpha}$ without the weighted integrability in \eqref{eq:age-initial-weight}, an additional aging contribution may survive and the right-hand side of \eqref{eq:age-RL-limit} need not reduce to $t^{-\alpha}\rho^{\rm in}$.
\end{remark}

\begin{remark}The same framework contains the diffusion limit.
For a constant escape rate $d(a)=D_0$, one has $Q(a)=e^{-D_0a}$ and $\overline m=1/D_0$, so the effective diffusion coefficient is $D_0$. For $d(a)=\alpha/(1+a)$ with $\alpha>1$, one has $\overline m=1/(\alpha-1)$ and the coefficient is $\alpha-1$.
\end{remark}

\subsection{Homogeneous renewal equation and decay of the renewal flux}
\label{sec:age-decay}

The same resolvent exponent also controls the large-time decay of the renewal flux. Consider
\begin{equation}
 \begin{cases}
 \partial_tu+\partial_au+d(a)u=0, &a>0,\ t>0,\\
 U(t)=u(t,0)=\displaystyle\int_0^\infty d(a)u(t,a)\,\dd a,\\
 u(0,a)=u^{\rm in}(a),
 \end{cases}
 \label{eq:homogeneous-age}
\end{equation}
with $u^{\rm in}\ge0$ and $\rho=\int_0^\infty u^{\rm in}(a)\,\dd a<\infty$. To see the connection with the resolvent framework directly, take the Laplace transform in time. The transformed characteristic equation gives
\[
 \widehat u(s,a)=\widehat U(s)e^{-D(a)-sa}
 +e^{-D(a)-sa}\int_0^a u^{\rm in}(a')e^{D(a')+sa'}\,\dd a'.
\]
Substitution into $\widehat U=\int_0^\infty d(a)\widehat u(s,a)\dd a$ and one integration by parts yield
\[
 s\,m(s)\widehat U(s)
 =\rho-s\int_0^\infty e^{-st}
 \left[\int_0^\infty u^{\rm in}(a)
 \left(\frac{1+a}{1+a+t}\right)^\alpha\,\dd a\right]\dd t.
\]
Inverting the Laplace transform gives the exact identity
\begin{equation}
 \int_0^t\frac{U(\tau)}{(1+t-\tau)^\alpha}\,\dd\tau
 =\rho-\int_0^\infty u^{\rm in}(a)
 \left(\frac{1+a}{1+a+t}\right)^\alpha\,\dd a.
 \label{eq:exact-renewal-identity}
\end{equation}
Both sides vanish at $t=0$. As $t\to\infty$, the second term on the right converges to zero, and therefore
\begin{equation}
 \lim_{t\to\infty}
 \int_0^t\frac{U(\tau)}{(1+t-\tau)^\alpha}\,\dd\tau=\rho.
 \label{eq:renewal-convolution-limit}
\end{equation}
In Laplace variables this implies
\begin{equation}
 \widehat U(s)\sim\frac{\rho}{\Gamma(1-\alpha)}s^{-\alpha}
 \qquad(s\downarrow0).
 \label{eq:U-Laplace-asymptotic}
\end{equation}
Thus $U$ has the characteristic decay scale $t^{\alpha-1}$. Under an additional standard Tauberian regularity assumption, for example eventual monotonicity,
\begin{equation}
 U(t)\sim\frac{\rho}{\Gamma(\alpha)\Gamma(1-\alpha)}t^{\alpha-1}.
 \label{eq:U-pointwise-asymptotic}
\end{equation}
Without such a pointwise assumption, the integrated bounds in \cref{app:tauberian} provide the appropriate rigorous interpretation. The important structural point is that the exponent in \eqref{eq:U-Laplace-asymptotic} is the same exponent that appears in the closure \eqref{eq:key-flux-density} and in the fractional macroscopic equation.

\section{A pathway-based internal-state model}
\label{sec:pathway}

The general framework is not restricted to residence time as the internal variable. We now consider an internal pathway state $a\in\R$ that diffuses in a confining but degenerate landscape. 

\subsection{The model}
Let $Q_0(a)>0$, $D(a)>0$, and $d(a)>0$, and define
\begin{equation}
 \caL_a u_\eps=-\partial_a\left(D(a)Q_0(a)\partial_a\frac{u_\eps}{Q_0(a)}\right),
 \qquad
 M(a)=d(a)Q_0(a).
 \label{eq:pathway-operator}
\end{equation}
We impose
\begin{equation}
 \int_\R d(a)Q_0(a)\,\dd a=1.
 \label{eq:pathway-normalization}
\end{equation}
Then, we have $\caL_a Q_0=0$ and 
\[
 \caL_a Q_0+d(a)Q_0=M(a).
\]
Thus $Q_0$ is the stationary internal profile. We assume that it is not integrable.

More precisely, for some $0<\sigma<1$, $k>0$, $n\ge0$, and positive constants $c_q,c_d,c_D$, assume that for~$|a|\gg 1$
\begin{align}
 Q_0(a)=c_q|a|^{-\sigma}(1+o(1)),\qquad
 d(a)=c_d|a|^{-k}(1+o(1)),\qquad
D(a)=c_D|a|^n(1+o(1)),\label{eq:pathwaytail}\end{align}
with corresponding derivative bounds for $|a|>a_0$. All functions $Q_0(a)$, $d(a)$ and $D(a)$ are continuous and bounded for $|a|<a_0$. 

The normalization \eqref{eq:pathway-normalization} requires
\begin{equation}
 \sigma+k>1.
 \label{eq:pathway-integrability}
\end{equation}
The resolvent profile solves
\begin{equation}
 \lambda Q_\lambda
 -\partial_a\left(DQ_0\partial_a\frac{Q_\lambda}{Q_0}\right)
 +d(a)Q_\lambda=d(a)Q_0.
 \label{eq:pathway-resolvent}
\end{equation}
In order to estimates the parameter $\alpha$, i.e., the mass $m(\lambda)$ (see \cref{prop:pathway-resolvent}) we need some preparation.

\begin{lemma}[Energy estimate]
\label{lem:pathway-energy}
Set $m(\lambda)=\int_\R Q_\lambda\,\dd a$. Assume that the boundary flux vanishes at infinity. Then
\begin{align}
&\lambda\int_\R\frac{Q_\lambda^2}{Q_0}\,\dd a
 +\int_\R DQ_0\left|\partial_a\frac{Q_\lambda}{Q_0}\right|^2\,\dd a
 +\int_\R dQ_0\left(\frac{Q_\lambda}{Q_0}-1\right)^2\,\dd a
 =\lambda m(\lambda),\label{eq:pathway-energy-identity}
\\
 &\int_\R d(a)Q_\lambda(a)\,\dd a=1-\lambda m(\lambda).
\label{eq:pathway-r-identity}
\end{align}
In particular,
\begin{align}
 \int_\R\frac{Q_\lambda^2}{Q_0}\,\dd a\le m(\lambda),\qquad
 \int_\R DQ_0\left|\partial_a\frac{Q_\lambda}{Q_0}\right|^2\,\dd a\le\lambda m(\lambda),\qquad
 \int_\R dQ_0\left(\frac{Q_\lambda}{Q_0}-1\right)^2\,\dd a&\le\lambda m(\lambda).\label{eq:pathway-energyinequality}
\end{align}
\end{lemma}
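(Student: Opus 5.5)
The plan is to test the resolvent equation \eqref{eq:pathway-resolvent} against two functions: the constant $1$, which gives the flux identity, and $Q_\lambda/Q_0-1$, which gives the energy identity. We write $v=Q_\lambda/Q_0$, so that $Q_\lambda=Q_0v$. The equation then reads
\[
 \lambda Q_0 v-\partial_a\bigl(DQ_0\,\partial_a v\bigr)+dQ_0(v-1)=0 .
\]

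\textbf{Flux identity \eqref{eq:pathway-r-identity}.} We integrate the equation over $\R$. The divergence term $\int_\R\partial_a(DQ_0\partial_a v)\,\dd a$ vanishes by the assumption that the boundary flux $DQ_0\partial_a v$ vanishes at $\pm\infty$. This gives $\lambda m(\lambda)+\int_\R dQ_\lambda\,\dd a=\int_\R dQ_0\,\dd a$. The right-hand side equals $1$ by \eqref{eq:pathway-normalization}, which yields \eqref{eq:pathway-r-identity}; it is also the instance of \eqref{eq:resolvent-conservation} for this model.

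\textbf{Energy identity \eqref{eq:pathway-energy-identity}.} We multiply the equation by $v-1$ and integrate.
\begin{itemize}
 \item The reaction term gives $\int_\R dQ_0(v-1)^2\,\dd a$.
 \item After an integration by parts, the diffusion term gives $\int_\R DQ_0|\partial_a v|^2\,\dd a$. The boundary term $[DQ_0\partial_a v\,(v-1)]_{-\infty}^{+\infty}$ is taken to vanish; this is the meaning we give to the boundary-flux hypothesis.
 \item The $\lambda$ term gives $\lambda\int_\R Q_0v(v-1)\,\dd a=\lambda\int_\R Q_\lambda^2/Q_0\,\dd a-\lambda m(\lambda)$.
\end{itemize}
Moving $\lambda m(\lambda)$ to the right-hand side gives exactly \eqref{eq:pathway-energy-identity}.

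\textbf{Inequalities \eqref{eq:pathway-energyinequality}.} Each of the three terms on the left of \eqref{eq:pathway-energy-identity} is nonnegative, because $Q_0,D,d>0$ and $\lambda>0$. Each term is therefore bounded by $\lambda m(\lambda)$, and dividing the first by $\lambda$ gives $\int_\R Q_\lambda^2/Q_0\,\dd a\le m(\lambda)$.

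\textbf{Main obstacle.} The only delicate point is justifying the vanishing boundary terms and the finiteness of the integrals. The profile $Q_0\sim|a|^{-\sigma}$ is not integrable, so $v-1$ need not decay, and for $\lambda>0$ we expect $v\to0$ at infinity. We will not verify this in detail. If needed, the first step would be to multiply by a cutoff $\chi_R(a)$, use $Q_\lambda\ge0$ and $m(\lambda)<\infty$, and then let $R\to\infty$. This would use the flux-decay hypothesis together with the tail assumptions \eqref{eq:pathwaytail} to control the commutator term $\int_\R DQ_0\,\partial_a v\,(v-1)\,\chi_R'\,\dd a$.
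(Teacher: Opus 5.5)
Your proof is correct and follows essentially the paper's argument. The paper tests \eqref{eq:pathway-resolvent} against $Q_\lambda/Q_0$ and against $1$. It then uses the flux identity to rewrite $\int dQ_0\,(Q_\lambda/Q_0)^2$ as $\int dQ_0\,(Q_\lambda/Q_0-1)^2-2\lambda m(\lambda)+1$. Your single test function $v-1$ is exactly that linear combination, so it reaches the same identity without the intermediate algebra. The boundary term $\bigl[DQ_0\,\partial_a v\,(v-1)\bigr]$ that you discard is the difference of the two terms the paper discards under the same vanishing-flux hypothesis, so your rigor matches the paper's on that point.
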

\begin{proof}
On the one hand, we multiply \eqref{eq:pathway-resolvent} by $Q_\lambda/Q_0$,  integrate with respect to $a$ over $\R$ and integrate my parts the second order term. This gives
\begin{equation}
\lambda\int\frac{Q_\lambda^2}{Q_0}\dd a
 +\int DQ_0\left|\partial_a\frac{Q_\lambda}{Q_0}\right|^2\dd a
 +\int d(a)Q_0\left(\frac{Q_\lambda}{Q_0}\right)^2\dd a
 =\int d(a)Q_\lambda\dd a.\label{eq:pathway-energy}
\end{equation}
On the other hand, integrating \eqref{eq:pathway-resolvent} directly
and using the vanishing boundary flux and
$\int_{\mathbb R}d(a)Q_0\dd a=1$, we obtain
\[
\lambda m(\lambda)+\int_{\mathbb R}dQ_\lambda\,da=1,
\]
that gives \eqref{eq:pathway-r-identity}.

Moreover, we may write 
\[
\begin{aligned}
\int_{\mathbb R}
d(a)Q_0
\left(\frac{Q_\lambda}{Q_0}\right)^2\,da=
\int_{\mathbb R}
d(a)Q_0
\left(
\frac{Q_\lambda}{Q_0}-1
\right)^2\,da
+2\int_{\mathbb R}
d(a)Q_0
\left(
\frac{Q_\lambda}{Q_0}-1
\right)\,da
+\int_{\mathbb R}d(a)Q_0\,da.
\end{aligned}
\]
By \eqref{eq:pathway-r-identity}, we conclude
\[
\int_{\mathbb R}
d(a)Q_0
\left(
\frac{Q_\lambda}{Q_0}-1
\right)\,da
=\int_{\mathbb R}d(a)Q_\lambda\,da-1
=-\lambda m(\lambda).
\]
Consequently, it holds
\[
\int_{\mathbb R}
dQ_0
\left(\frac{Q_\lambda}{Q_0}\right)^2\,da
=
\int_{\mathbb R}
dQ_0
\left(
\frac{Q_\lambda}{Q_0}-1
\right)^2\,da
-2\lambda m(\lambda)+1.
\]
Substituting this and \eqref{eq:pathway-r-identity} into \eqref{eq:pathway-energy} yields \eqref{eq:pathway-energy-identity}.
Since all three terms on the left-hand side of \eqref{eq:pathway-energy-identity} are nonnegative,
we immediately obtain \eqref{eq:pathway-energyinequality}.
\end{proof}

\subsection{Resolvent tail and anomalous exponent}

Rearranging \eqref{eq:pathway-resolvent}, we obtain
\[
 Q_\lambda
 =\frac{d(a)}{d(a)+\lambda}Q_0
 +\frac{1}{d(a)+\lambda}\partial_a
 \left(DQ_0\partial_a\frac{Q_\lambda}{Q_0}\right).
\]
After integration, we have
\begin{equation}
 m(\lambda)=J_1(\lambda)+J_2(\lambda),
 \label{eq:pathway-J12}
\end{equation}
where
\begin{align}
J_1(\lambda)=\int_\R\frac{d(a)}{d(a)+\lambda}Q_0(a)\,\dd a,
 \qquad
J_2(\lambda)&=\int_\R\frac{1}{d(a)+\lambda}\partial_a
\left(DQ_0\partial_a\frac{Q_\lambda}{Q_0}\right)\,\dd a.
 \label{eq:pathway-J}
\end{align}

\paragraph{The first term $J_1(\lambda)$.}
For $J_1(\lambda)$, we split the integral into a bounded
region and a tail region. Let $a_0>0$ be sufficiently large such
that the asymptotic assumptions in \eqref{eq:pathwaytail}
hold for $|a|\geq a_0$. Then
\[
\begin{aligned}
J_1(\lambda)
&=
\int_{|a|\leq a_0}
\frac{d(a)Q_0(a)}{d(a)+\lambda}\,da
+
\int_{|a|>a_0}
\frac{d(a)Q_0(a)}{d(a)+\lambda}\,da
\\
&=
O(1)
+
\int_{|a|>a_0}
\frac{d(a)Q_0(a)}{d(a)+\lambda}\,da.
\end{aligned}
\]
For $|a|>a_0$, the asymptotic assumptions 
$Q_0(a)\sim c_q|a|^{-\sigma}$,
$d(a)\sim c_d|a|^{-k}$, and that the leading-order integrand is even, give
\[
\begin{aligned}
\int_{|a|>a_0}
\frac{d(a)Q_0(a)}{d(a)+\lambda}\,da
\sim
c_dc_q
\int_{|a|>a_0}
\frac{|a|^{-k-\sigma}}
{|a|^{-k}+\lambda}\,da=
c_dc_q
\int_{|a|>a_0}
\frac{|a|^{-\sigma}}
{1+\lambda|a|^k}\,da.
=
2c_dc_q
\int_{a_0}^{\infty}
\frac{a^{-\sigma}}
{1+\lambda a^k}\,da.
\end{aligned}
\]
Introducing the rescaled variable
\[
z=\lambda^{1/k}a,
\qquad
a=\lambda^{-1/k}z,
\qquad
\dd a=\lambda^{-1/k}\dd z,
\]
we obtain
\[
\begin{aligned}
J_1(\lambda)
\sim
2c_dc_q
\int_{\lambda^{1/k}a_0}^{\infty}
\frac{
\lambda^{\sigma/k}z^{-\sigma}
}{
1+z^k
}
\lambda^{-1/k}\dd z=
2c_dc_q
\lambda^{(\sigma-1)/k}
\int_{\lambda^{1/k}a_0}^{\infty}
\frac{z^{-\sigma}}{1+z^k}\dd z.
\end{aligned}
\]
Since $0<\sigma<1$ and $k>0$,
\[
\frac{z^{-\sigma}}{1+z^k}\in L^1(0,\infty),
\]
and $\lambda^{1/k}a_0\to0$ as $\lambda\to0$. Hence,
by the dominated convergence theorem,
\[
\begin{aligned}
J_1(\lambda)
&\sim
2c_dc_q
\lambda^{-(1-\sigma)/k}
\int_0^\infty
\frac{z^{-\sigma}}{1+z^k}\dd z.
\end{aligned}
\]

The first term determines the leading order. Indeed, if $k>1-\sigma$, then
\begin{equation}
 J_1(\lambda)\sim\gamma_0\lambda^{-(1-\sigma)/k},
 \label{eq:J1-asymptotic}
\end{equation}
with $
 \gamma_0
 =2c_qc_d
 \int_0^\infty\frac{z^{-\sigma}}{1+z^k}\,\dd z
$.
The integral is finite precisely because $0<1-\sigma<k$.

\paragraph{The second term $J_2(\lambda)$.}
For the second term, integration by parts gives
\[
J_2(\lambda)
=
-\int_{\mathbb R}
DQ_0
\partial_a\frac{Q_\lambda}{Q_0}
\partial_a\left(\frac{1}{d(a)+\lambda}\right)\dd a.
\]
Since
\[
\partial_a\left(\frac{1}{d(a)+\lambda}\right)
=
-\frac{d'(a)}{(d(a)+\lambda)^2},
\]
the Cauchy--Schwarz inequality yields
\[
\begin{aligned}
|J_2(\lambda)|
&\leq
\left(
\int_{\mathbb R}
DQ_0
\left|
\partial_a\frac{Q_\lambda}{Q_0}
\right|^2\,da
\right)^{1/2}
\left(
\int_{\mathbb R}
DQ_0
\frac{|d'(a)|^2}{(d(a)+\lambda)^4}\dd a
\right)^{1/2}
\\
&\leq
\lambda^{1/2}m(\lambda)^{1/2}
\left(
\int_{\mathbb R}
DQ_0
\frac{|d'(a)|^2}{(d(a)+\lambda)^4}\dd a
\right)^{1/2},
\end{aligned}
\]
where we used \eqref{eq:pathway-energyinequality}.

For $|a|$ sufficiently large, the assumptions in \eqref{eq:pathwaytail}
and the corresponding derivative bounds imply
\[
DQ_0\frac{|d'(a)|^2}{(d(a)+\lambda)^4}
\leq
C\frac{|a|^{n-\sigma-2k-2}}
{(|a|^{-k}+\lambda)^4}.
\]
Using the change of variables
$a=\lambda^{-1/k} z$ in the tail region gives
\[
\begin{aligned}
\int_{|a|\geq a_0}
DQ_0\frac{|d'(a)|^2}{(d(a)+\lambda)^4}\dd a
&\leq
C\int_{|a|\geq a_0}
\frac{|a|^{n-\sigma-2k-2}}
{(|a|^{-k}+\lambda)^4}\dd a\leq
C\lambda^{\frac{1+\sigma-n-2k}{k}}
\int_{\mathbb R}
\frac{|z|^{n-\sigma-2k-2}}
{(1+|z|^{-k})^4}\dd z.
\end{aligned}
\]
Hence, whenever the last integral is finite,
\[
\int_{|a|\geq a_0}
DQ_0\frac{|d'|^2}{(d+\lambda)^4}\,da
\leq
C\lambda^{\frac{1+\sigma-n-2k}{k}}.
\]
On the compact region $|a|\leq a_0$, the coefficients are bounded and
$d+\lambda$ is bounded from below, so
\[
\int_{\mathbb R}
DQ_0\frac{|d'(a)|^2}{(d(a)+\lambda)^4}\dd a
\leq
C\lambda^{\frac{1+\sigma-n-2k}{k}}+C,
\]
and consequently
\begin{equation}\label{eq:J2-estimate}
|J_2(\lambda)|
\leq
C\lambda^{\frac{1+\sigma-n-2k}{2k}}
m(\lambda)^{1/2}
+
C\lambda^{1/2}m(\lambda)^{1/2}.
\end{equation}

\paragraph{Close the estimate.} Denote
\[
l:=\frac{1+\sigma-n-2k}{2k},
\]we use \eqref{eq:pathway-J12} 
together with \eqref{eq:J2-estimate}, let $C$ denote a arbitrary constant, for sufficiently small $\lambda$,
\[
m(\lambda)
\leq J_1(\lambda)
+C\lambda^l m(\lambda)^{1/2}
+C\lambda^{1/2}m(\lambda)^{1/2}.
\]
By Young's inequality, for any $\eta>0$,
\[
C\lambda^l m(\lambda)^{1/2}
\leq
\eta m(\lambda)
+\frac{C^2}{\eta}\lambda^{2l},
\qquad
C\lambda^{1/2}m(\lambda)^{1/2}
\leq
\eta m(\lambda)+\frac{C^2}{\eta}\lambda.
\]
Choosing $\eta>0$ sufficiently small and absorbing the resulting
terms into the left-hand side gives
\[m(\lambda)
\leq
C J_1(\lambda)
+C\lambda^{2l}
+C\lambda .\]
Since $J_1(\lambda)\sim\gamma_0\lambda^{-(1-\sigma)/k}$,
it remains to compare the two remainder terms with $J_1$.
In particular,
\[
\frac{\lambda^{2l}}{J_1(\lambda)}
=
O\left(
\lambda^{\frac{1+\sigma-n-2k}{k}
+\frac{1-\sigma}{k}}
\right)
=
O\left(
\lambda^{\frac{2-n-2k}{k}}
\right),
\]
which tends to zero provided $n+2k<2$.
Moreover,
\[
\frac{\lambda}{J_1(\lambda)}
=
O\left(
\lambda^{1+(1-\sigma)/k}
\right)
\longrightarrow 0.
\]
Consequently, $
m(\lambda)=O(J_1(\lambda))=
O\left(
\lambda^{-(1-\sigma)/k}
\right)$.
Substituting this estimate back into \eqref{eq:J2-estimate}, we obtain
\[
|J_2(\lambda)|
\leq
C\lambda^l
\lambda^{-\frac{1-\sigma}{2k}}
+
C\lambda^{1/2}
\lambda^{-\frac{1-\sigma}{2k}}=
C\lambda^{\frac{2\sigma-n-2k}{2k}}
+
C\lambda^{\frac{k-1+\sigma}{2k}}.
\]
Under the above conditions, both terms are
$o\left(\lambda^{-(1-\sigma)/k}
\right)=o(J_1(\lambda))$. Therefore,
\[
m(\lambda)
=
J_1(\lambda)+o(J_1(\lambda))
\sim J_1(\lambda)
\sim
\gamma_0\lambda^{-(1-\sigma)/k}.
\sim
\gamma_0\lambda^{\alpha-1}.
\]
with
$\alpha=1-\frac{1-\sigma}{k}$.

\begin{proposition}[Pathway resolvent asymptotics]
\label{prop:pathway-resolvent}
Assume \eqref{eq:pathwaytail}, $0<\sigma<1$, $k>0$, $k+\sigma>1$;  $n\ge0$, $n+2k<2$, and the regularity needed for \eqref{eq:J2-estimate}. Define
\begin{equation}
 \alpha=1-\frac{1-\sigma}{k}\in(0,1).
 \label{eq:pathway-alpha}
\end{equation}
Then
\begin{equation}
 m(\lambda)\sim\gamma_0\lambda^{\alpha-1},
 \qquad
 \int dQ_\lambda=1-\gamma_0\lambda^\alpha(1+o(1)).
 \label{eq:pathway-resolvent-asymptotic}
\end{equation}
\end{proposition}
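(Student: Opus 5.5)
The proof assembles the computation carried out just before the statement. We start from the exact decomposition \eqref{eq:pathway-J12}, $m(\lambda)=J_1(\lambda)+J_2(\lambda)$, obtained by dividing \eqref{eq:pathway-resolvent} by $d(a)+\lambda$ and integrating in $a$. The plan has three steps: establish the sharp asymptotics of the explicit term $J_1$; bound the flux term $J_2$ by the energy estimates of \cref{lem:pathway-energy}; then close with an absorption argument.

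\textbf{Step 1: asymptotics of $J_1$.} We split $J_1$ into $|a|\le a_0$, which contributes $O(1)$, and the tail region. In the tail we use \eqref{eq:pathwaytail} and rescale $a=\lambda^{-1/k}z$. The integrand $z^{-\sigma}/(1+z^k)$ is integrable on $(0,\infty)$ exactly when $0<1-\sigma<k$, i.e.\ $\sigma+k>1$, so dominated convergence gives $J_1(\lambda)\sim\gamma_0\lambda^{-(1-\sigma)/k}$. With $\alpha=1-(1-\sigma)/k$ this reads $J_1\sim\gamma_0\lambda^{\alpha-1}$, and $\alpha\in(0,1)$ follows from $0<\sigma<1$ and $k>1-\sigma$. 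The $(1+o(1))$ factors in \eqref{eq:pathwaytail} are harmless: for any $\delta>0$ they are within $\delta$ of $1$ beyond some $a_\delta$, and the compact remainder is $O(1)=o(\lambda^{\alpha-1})$.

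\textbf{Step 2: bound on $J_2$.} We integrate by parts once, which requires the vanishing boundary flux assumed in \cref{lem:pathway-energy}. Cauchy--Schwarz then gives a product of two factors. The first is the Dirichlet energy, bounded by $\lambda m(\lambda)$ via \eqref{eq:pathway-energyinequality}. The second is the weight integral $\int DQ_0|d'|^2(d+\lambda)^{-4}$, which we estimate with the derivative bound $|d'|\lesssim|a|^{-k-1}$ and the same rescaling. The outcome is \eqref{eq:J2-estimate}: $|J_2|\le C(\lambda^{l}+\lambda^{1/2})m^{1/2}$ with $l=(1+\sigma-n-2k)/(2k)$.

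\textbf{Step 3: closing the estimate.} We apply Young's inequality and absorb into the left-hand side, giving $m\le C J_1+C\lambda^{2l}+C\lambda$. Since $n+2k<2$, $\lambda^{2l}/J_1=O(\lambda^{(2-n-2k)/k})\to0$, and trivially $\lambda/J_1\to 0$, so $m=O(\lambda^{\alpha-1})$. Reinserting this into \eqref{eq:J2-estimate} gives
\[
|J_2|\le C\lambda^{l+(\alpha-1)/2}+C\lambda^{1/2+(\alpha-1)/2}.
\]
Comparing exponents with $\alpha-1$, the first term is $o(\lambda^{\alpha-1})$ iff $l>(\alpha-1)/2$, which is again $n+2k<2$; the second requires $\alpha/2>0$. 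Hence $J_2=o(J_1)$ and $m(\lambda)\sim\gamma_0\lambda^{\alpha-1}$. The flux statement in \eqref{eq:pathway-resolvent-asymptotic} then follows immediately from \eqref{eq:pathway-r-identity}: $\int dQ_\lambda=1-\lambda m(\lambda)=1-\gamma_0\lambda^{\alpha}(1+o(1))$.

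\textbf{Main obstacle.} The delicate point is the weight integral in Step 2. After rescaling, $\int|z|^{n-\sigma-2k-2}(1+|z|^{-k})^{-4}\,dz$ must converge both near $z=0$, where it behaves like $|z|^{n-\sigma+2k-2}$ and so needs $n-\sigma+2k>1$, and at infinity, where it needs $n-\sigma-2k<1$. The first condition can fail. If it does, we plan to split instead at $|a|=\lambda^{-1/k}$. In the inner region we bound $(d+\lambda)^{-4}\le d^{-4}$, so the integral grows at most like $\lambda^{2l}$ up to a logarithm, or is $O(1)$. In the outer region the rescaled integral converges. This is the kind of regularity the statement leaves as ``needed for \eqref{eq:J2-estimate}''. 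In every case the resulting power stays $o(\lambda^{\alpha-1})$ under $n+2k<2$, which is what the absorption argument requires.
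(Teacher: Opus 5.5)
Your proposal is correct and is essentially the paper's own argument: the same decomposition $m=J_1+J_2$, the rescaling $a=\lambda^{-1/k}z$ for $J_1$, integration by parts with Cauchy--Schwarz and \eqref{eq:pathway-energyinequality} for $J_2$, Young absorption using $n+2k<2$, reinsertion to get $J_2=o(J_1)$, and \eqref{eq:pathway-r-identity} for the flux. Your extra splitting at $|a|=\lambda^{-1/k}$, used when the rescaled weight integral diverges at $z=0$ (i.e.\ $n+2k\le 1+\sigma$), correctly fills in what the paper leaves as ``whenever the last integral is finite''; also, if you carry $c_d$ through the rescaling in Step~1 you get $\gamma_0=2c_q c_d^{(1-\sigma)/k}\int_0^\infty z^{-\sigma}(1+z^k)^{-1}\,\dd z$, not the factor $2c_qc_d$ written in the paper.
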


Thus the pathway model satisfies exactly the same abstract criterion as the age model, although its microscopic internal dynamics is entirely different.

\begin{corollary}[Subdiffusion generated by pathway dynamics]
\label{cor:pathway-limit}
Under the assumptions of \cref{prop:pathway-resolvent}, the spatial jump assumptions of \cref{sec:general}, and the initial resolvent bounds \eqref{eq:initial-resolvent-bound}, choose $\beta=2/\alpha$. Then the spatial density converges, along weakly convergent subsequences, to
\begin{equation}
 \CapD\rho-\frac{1}{\gamma_0}\J\rho=0,
 \qquad \rho(0)=\rho^{\rm in}.
 \label{eq:pathway-limit}
\end{equation}
Equivalently,
\[
 \partial_t\int_0^t\frac{\rho(\tau)}{(t-\tau)^\alpha}\,\dd\tau
 -\frac{\Gamma(1-\alpha)}{\gamma_0}\J\rho
 =t^{-\alpha}\rho^{\rm in}.
\]
\end{corollary}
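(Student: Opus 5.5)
The corollary should follow by feeding the pathway model into \cref{thm:general-subdiffusion}. The work is to check that each hypothesis of that theorem holds in the setting of \eqref{eq:pathway-operator}, with $\gamma=\gamma_0$.

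\emph{Structural assumptions.} We take $E=\R$ and $\caL_a=-\partial_a\bigl(DQ_0\partial_a(\cdot/Q_0)\bigr)$.
\begin{itemize}
\item Conservativity $\caL_a^*1=0$ holds because $\caL_a$ is in divergence form. Under the vanishing boundary flux assumed in \cref{lem:pathway-energy}, $\langle\caL_a f\rangle_a=0$ for every admissible $f$.
\item Positivity preservation of $\caL_a$ follows from the maximum principle for the degenerate elliptic operator written in terms of $f/Q_0$.
\item With $M=dQ_0$, the normalization \eqref{eq:pathway-normalization} is exactly \eqref{eq:conservative-internal}.
\item The resolvent equation \eqref{eq:Qlambda-general} is \eqref{eq:pathway-resolvent}. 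Hence $m(\lambda)$ and $r(\lambda)$ are the quantities studied in \cref{lem:pathway-energy} and \cref{prop:pathway-resolvent}; in particular \eqref{eq:pathway-r-identity} is the identity \eqref{eq:resolvent-conservation}.
\end{itemize}

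\emph{Heavy-tail assumption.} \cref{prop:pathway-resolvent} gives $m(\lambda)\sim\gamma_0\lambda^{\alpha-1}$ with $\alpha=1-(1-\sigma)/k$. This exponent lies in $(0,1)$: it is positive because $k+\sigma>1$, and it is below $1$ because $\sigma<1$. So \eqref{eq:resolvent-heavy-tail} holds with $\gamma=\gamma_0$.

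\emph{Remaining hypotheses.} The jump assumptions \eqref{as:Subomega}--\eqref{as:Subomegaeps} and the initial bounds \eqref{eq:initial-resolvent-bound} are assumed directly in the corollary. The theorem, with $\beta=2/\alpha$, then gives the Laplace--weak limit \eqref{eq:Laplace-weak-definition} with coefficient $1/\gamma_0$, which is \eqref{eq:pathway-limit}. The Riemann--Liouville form follows from the same Laplace identities used at the end of that proof, now with coefficient $\Gamma(1-\alpha)/\gamma_0$.

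\emph{Main obstacle.} The step needing the most care is well-posedness of the resolvent problem \eqref{eq:pathway-resolvent} and of $G_\lambda$ on $\R$, including the vanishing boundary flux at infinity that both the energy identity and the integration producing \eqref{eq:pathway-r-identity} rely on. My approach would be:
\begin{enumerate}
\item Solve on $[-L,L]$ with a zero-flux (Neumann-type) condition for $Q_\lambda/Q_0$.
\item Get uniform estimates from \eqref{eq:pathway-energyinequality}. These control $\lambda\int Q_\lambda^2/Q_0$, and $m(\lambda)$ itself is finite for $\lambda>0$ by comparison with $J_1$.
\item Pass to the limit $L\to\infty$ using monotonicity in $L$, which follows from the maximum principle.
\end{enumerate}
The boundary flux $DQ_0\,\partial_a(Q_\lambda/Q_0)$ at $\pm L$ should tend to zero along a subsequence $L\to\infty$, because $DQ_0|\partial_a(Q_\lambda/Q_0)|^2$ is integrable. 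Nonnegativity of $Q_\lambda$ and of $G_\lambda u^{\rm in}$ also comes from the maximum principle. With these points settled, the argument reduces to the abstract theorem.
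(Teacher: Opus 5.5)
Your proposal is correct and follows the paper's route: the corollary is \cref{thm:general-subdiffusion} with $\gamma=\gamma_0$, where \cref{prop:pathway-resolvent} supplies the heavy-tail hypothesis and \eqref{eq:pathway-normalization} is \eqref{eq:conservative-internal}. Your well-posedness sketch goes beyond the paper, which simply builds the vanishing flux into the hypotheses of \cref{lem:pathway-energy} and \cref{prop:pathway-resolvent}; if you keep it, three fixes are needed: monotonicity in $L$ of the Neumann-truncated solutions does not follow directly from the maximum principle (the flux of the larger-domain solution at $\pm L$ has no definite sign), so use compactness from the $L$-uniform energy bound instead; the simplest bound on the mass is $\lambda m_L(\lambda)\le\int dQ_0\,\dd a=1$, obtained by integrating the truncated problem; and the subsequential vanishing of $DQ_0\,\partial_a(Q_\lambda/Q_0)$ relies on $\int^{\infty}(DQ_0)^{-1}\,\dd a=\infty$, i.e.\ $n-\sigma\le1$, which holds here because $n<2-2k<2\sigma$.
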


\section{Regularity of the limiting equation}
\label{sec:regularity}

The compactness supplied by mass conservation alone gives a measure-valued limit. Additional regularity follows from the fractional diffusion equation. We illustrate the energy structure in the age-normalized case $A=I$, for which
\begin{equation}
 \partial_t\int_0^t\frac{\rho(\tau,x)}{(t-\tau)^\alpha}\,\dd\tau
 -\Delta\rho(t,x)=t^{-\alpha}\rho^{\rm in}(x).
 \label{eq:regularity-equation}
\end{equation}
Let $v(t,x)=\rho(t,x)-\rho^{\rm in}(x)$. Then $v(0)=0$ and
\begin{equation}
 \partial_t\int_0^t\frac{v(\tau,x)}{(t-\tau)^\alpha}\,\dd\tau
 -\Delta v=\Delta\rho^{\rm in}.
 \label{eq:v-equation}
\end{equation}
For a scalar $v\in C^1([0,T])$ with $v(0)=0$, one has the fractional chain identity
\begin{align}
 v(t)\partial_t\int_0^t\frac{v(\tau)}{(t-\tau)^\alpha}\,\dd\tau
 &=\frac12\partial_t\int_0^t\frac{v(\tau)^2}{(t-\tau)^\alpha}\,\dd\tau
 +\frac{v(t)^2}{2t^\alpha}
 \nonumber\\
 &\quad+\frac{\alpha}{2}\int_0^t
 \frac{|v(\tau)-v(t)|^2}{(t-\tau)^{\alpha+1}}\,\dd\tau.
 \label{eq:fractional-chain}
\end{align}
Multiplying \eqref{eq:v-equation} by $v(t,x)$ and integrating in $x$ gives
\begin{align}
 &\frac12\partial_t\int_0^t\int_{\R^d}
 \frac{v(\tau,x)^2}{(t-\tau)^\alpha}\,\dd x\dd\tau
 +\int_{\R^d}|\nabla v(t,x)|^2\,\dd x
 +\frac{1}{2t^\alpha}\int_{\R^d}v(t,x)^2\,\dd x
 \nonumber\\
 &\qquad
 +\frac{\alpha}{2}\int_0^t\int_{\R^d}
 \frac{|v(\tau,x)-v(t,x)|^2}{(t-\tau)^{\alpha+1}}\,\dd x\dd\tau
 =-\int_{\R^d}\nabla v(t,x)\cdot\nabla\rho^{\rm in}(x)\,\dd x.
 \label{eq:energy-identity}
\end{align}
This gives an $H^1$ estimate whenever $\rho^{\rm in}\in H^1$. More generally, for every convex $\Phi$ with $\Phi(0)=0$,
\begin{equation}
 \partial_t\int_0^t\frac{\Phi(v(\tau))}{(t-\tau)^\alpha}\,\dd\tau
 \le \Phi'(v(t))\partial_t\int_0^t\frac{v(\tau)}{(t-\tau)^\alpha}\,\dd\tau.
 \label{eq:convex-chain}
\end{equation}
Such inequalities are standard starting points for $L^p$ and Sobolev estimates; see, for example, \cite{DongKim_2020,Allen2017,zacher_2012,FEB_2024}.

At the microscopic level, time regularity requires a preparation condition because the macroscopic time is accelerated by $\eps^{-\beta}$. In the age model, if
\begin{equation}
 |\partial_au^{\rm in}+d(a)u^{\rm in}|
 \le C\eps^\beta(1+a)^{-\alpha},
 \label{eq:well-prepared-time}
\end{equation}
then $z_\eps=\partial_tu_\eps$ satisfies the same positivity-preserving evolution and comparison gives a uniform bound on $\int d(a)|z_\eps|\,\dd a$. Consequently, the renewal flux has a uniform time-Lipschitz estimate. Obtaining estimates of this type without a near-equilibrium preparation remains an interesting question.

\section{Discussion}

Our general derivation of the subdiffusion equation can be summarized in three steps.
\begin{enumerate}[label=(\roman*)]
 \item The internal dynamics is compressed into the resolvent mass $m(\lambda)$. An integrable equilibrium gives $m(0)<\infty$ and normal diffusion; a divergent law $m(\lambda)\sim\gamma\lambda^{\alpha-1}$ gives subdiffusion.
 \item The resolvent converts total density into renewal flux through the relation
 \[
  \widehat R_\eps\sim\gamma^{-1}(\eps^\beta s)^{1-\alpha}\widehat\rho_\eps.
 \]
 This is the universal closure relation.
 \item Balancing the renewal flux with jumps of size $\eps$ selects $\alpha\beta=2$ and produces the Caputo equation.
\end{enumerate}

This organization makes the role of the age-structured model transparent. The age calculation is not a separate route to the limit; it is an explicit verification of the abstract resolvent criterion with $M=\delta_0$. The pathway model then demonstrates why the abstraction is useful: the same fractional equation follows from a qualitatively different internal mechanism once its resolvent has the same small-frequency singularity.

Several extensions are natural. Biased jump kernels add drift to $\J$; heavy-tailed jump lengths can combine time subdiffusion with spatial fractional operators; nonlinear renewal or interaction rates lead to nonlinear fractional equations; and non-prepared initial internal distributions can generate aging terms beyond the standard Caputo initial condition. The present framework isolates the part of the argument that remains unchanged across these variants.

We conclude by noticing that another route to subdiffusions is the theory of subordinators \cite{Baeumer2009}. 

\appendix

\section{Laplace-transform identities and integrated Tauberian bounds}
\label{app:tauberian}

For a function $f(t)$ supported in $t\ge0$, write
\[
 \widehat f(s)=\Lap_sf=\int_0^\infty e^{-st}f(t)\,\dd t.
\]
If $f$ has no atom at $t=0$, then $\Lap_s(\partial_tf)=s\widehat f$. For $0<\alpha<1$,
\begin{align}
 \Lap_s(t^{-\alpha})&=\Gamma(1-\alpha)s^{\alpha-1},\label{eq:Lap-power}\\
 \Lap_s\left(\partial_t\int_0^t\frac{f(\tau)}{(t-\tau)^\alpha}\,\dd\tau\right)
 &=\Gamma(1-\alpha)s^\alpha\widehat f(s).
 \label{eq:Lap-fractional}
\end{align}
The second formula follows by Fubini and the change of variables $z=s(t-\tau)$.

The implication
\[
 f(t)\approx t^{\alpha-1}\quad(t\to\infty)
 \quad\Longrightarrow\quad
 \widehat f(s)\approx s^{-\alpha}\quad(s\downarrow0)
\]
is immediate. The reverse implication requires a Tauberian condition for pointwise asymptotics. Positivity alone still gives an integrated statement.

\begin{lemma}[Integrated decay from a Laplace bound]
\label{lem:integrated-tauberian}
Let $f(t)\ge0$ and suppose that, for $0<s\le1$,
\[
 c_-s^{-\alpha}\le\widehat f(s)\le c_+s^{-\alpha},
 \qquad 0\le\alpha\le1.
\]
Then, for every $\delta>0$, there exist positive constants $C_\pm$, independent of $\delta$ for $\delta$ in a fixed bounded interval, such that
\begin{equation}
 \frac{C_-}{\delta}
 \le\int_0^\infty f(t)t^{-\alpha-\delta}\,\dd t,
 \qquad
 \int_1^\infty f(t)t^{-\alpha-\delta}\,\dd t
 \le\frac{C_+}{\delta}.
 \label{eq:integrated-tauberian}
\end{equation}
\end{lemma}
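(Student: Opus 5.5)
The plan is to obtain both bounds from one identity that writes the power weight as a superposition of exponentials. For $\mu=\alpha+\delta>0$ and $t>0$,
\[
 t^{-\mu}=\frac{1}{\Gamma(\mu)}\int_0^\infty e^{-st}s^{\mu-1}\,\dd s .
\]
Since $f\ge0$, Tonelli's theorem then gives
\[
 \int_0^\infty f(t)\,t^{-\alpha-\delta}\,\dd t=\frac{1}{\Gamma(\alpha+\delta)}\int_0^\infty \widehat f(s)\,s^{\alpha+\delta-1}\,\dd s .
\]
Both sides lie in $[0,\infty]$, so no integrability needs to be checked beforehand. We then use the hypothesis on $\widehat f$ in the range $0<s\le 1$.

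\emph{Lower bound.} Keep only $s\in(0,1]$ and insert $\widehat f(s)\ge c_-s^{-\alpha}$. This gives
\[
 \int_0^\infty f\,t^{-\alpha-\delta}\,\dd t\ \ge\ \frac{c_-}{\Gamma(\alpha+\delta)}\int_0^1 s^{\delta-1}\,\dd s=\frac{c_-}{\delta\,\Gamma(\alpha+\delta)} .
\]
On a bounded range of $\alpha+\delta$, the quantity $\Gamma(\alpha+\delta)$ is bounded above, so we may take $C_-=c_-/\sup\Gamma$.

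\emph{Upper bound.} The difficulty is that $\widehat f(s)$ is controlled only for $s\le1$, and the identity above would also involve large $s$. We avoid this by working on $t\ge1$ and using a cutoff instead. For $t\ge1$ we have $e^{-st}\ge e^{-1}$ whenever $s\le 1/t$, hence
\[
 t^{-\mu}\le C_\mu\int_0^{1}e^{-st}s^{\mu-1}\,\dd s,\qquad C_\mu=\frac{e\,\mu}{1} \ \ (\text{since }\textstyle\int_0^{1/t}s^{\mu-1}\dd s=t^{-\mu}/\mu).
\]
Multiply by $f(t)\ge0$, integrate over $t\in[1,\infty)$, and apply Tonelli. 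Bounding $\int_1^\infty e^{-st}f(t)\,\dd t$ by $\widehat f(s)\le c_+s^{-\alpha}$ for $s\le1$ gives
\[
 \int_1^\infty f(t)\,t^{-\alpha-\delta}\,\dd t\ \le\ e(\alpha+\delta)\,c_+\int_0^1 s^{\delta-1}\,\dd s=\frac{e(\alpha+\delta)c_+}{\delta} .
\]
This is $C_+/\delta$ with $C_+=e(\alpha+\delta)c_+$, uniform for $\delta$ in a bounded interval.

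The step needing care is this upper bound. It requires the cutoff trick, since the hypothesis gives no information on $\widehat f(s)$ for $s>1$. It is also why the upper bound is stated only over $[1,\infty)$: near $t=0$ the weight $t^{-\alpha-\delta}$ may fail to be integrable against $f$. For the lower bound, the only point to watch is the uniformity of $\Gamma(\alpha+\delta)$. If $\alpha=0$ and $\delta\to0$, then $1/\Gamma(\delta)\sim\delta$ and the constant degenerates. In that case I would either restrict to $\delta$ in a bounded interval bounded away from $0$ when $\alpha=0$, or note that the statement then requires the interval to be separated from the origin.
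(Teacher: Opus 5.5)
Your proof is correct and is essentially the paper's argument: the paper multiplies the Laplace bounds by $s^{\alpha+\delta-1}$, integrates over $(0,1)$, and uses Fubini to produce the factor $\int_0^t z^{\alpha+\delta-1}e^{-z}\,\dd z$. It then bounds this factor above by $\Gamma(\alpha+\delta)$, which is your lower bound, and below by $1/(e(\alpha+\delta))$ for $t\ge1$, which is exactly your cutoff estimate after the substitution $z=st$. Your caveat is also well taken and applies equally to the paper's statement and proof: $C_-$ is uniform only when $\alpha+\delta$ stays away from $0$, so for $\alpha=0$ the bound $C_-/\delta$ fails as $\delta\to0$ (for instance, $f(t)=e^{-t}$ satisfies the hypothesis but gives $\int_0^\infty f\,t^{-\delta}\,\dd t=\Gamma(1-\delta)\to1$).
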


\begin{proof}
Multiply the Laplace bounds by $s^{\alpha+\delta-1}$ and integrate over $s\in(0,1)$. The Fubini theorem gives
\[
 \int_0^1s^{\alpha+\delta-1}\widehat f(s)\,\dd s
 =\int_0^\infty f(t)t^{-\alpha-\delta}
 \left(\int_0^t z^{\alpha+\delta-1}e^{-z}\,\dd z\right)\dd t.
\]
The left-hand side is bounded above and below by constants times $1/\delta$. The incomplete gamma factor is bounded above uniformly and is bounded below by a positive constant for $t\ge1$, which gives \eqref{eq:integrated-tauberian}.
\end{proof}

\section*{Declarations}
\noindent\textbf{Conflict of interest.} The authors declare no conflict of interest.

\noindent\textbf{AI-assisted tools.} Used during manuscript preparation to improve the proof organizations and language of the manuscript.

\noindent\textbf{Data availability.} No data have been used or produced.

\end{document}